\theoremstyle{plain}
\newtheorem{theorem}{Theorem}
\newtheorem{lemma}[theorem]{Lemma}
\theoremstyle{definition}
\newtheorem{definition}[theorem]{Definition}
\newcommand{\und}{\underline}
\begin{document}
\title{Constraints, MMSNP and expander relational structures}

\author{G\'abor Kun}

\email{kungabor@cs.elte.hu}

\address{Department of Algebra and Number Theory, E\" otv\" os University,
Budapest, P\' azm\' any s\' et\' any 1C, H-1117, Hungary} 

\begin{abstract}
We give a poly-time construction for a combinatorial classic 
known as Sparse Incomparability Lemma, studied by Erd\H os, Lov\'asz, 
Ne\v{s}et\v{r}il, R\"odl and others: We show that every Constraint
Satisfaction Problem is poly-time equivalent to its restriction to 
structures with large girth.
This implies that the complexity classes CSP and Monotone Monadic Strict 
NP introduced by Feder and Vardi are computationally equivalent. The 
technical novelty of the paper is a concept of expander relations and 
a new type of product for relational structures: a generalization of 
the zig-zag product, the twisted product.
\end{abstract}

\thanks{This research was supported by OTKA Grants no. T043671 and NK 67867,
Subhash Khot's NSF Waterman Award CCF-1061938 and the MTA R\'enyi "Lend\"ulet" 
Groups and Graphs Research Group.}

\maketitle

\section{Introduction}

The construction of graphs with large girth and chromatic number is a classic 
in probabilistic combinatorics. Many great mathematicians have contributed to 
this: Erd\H os \cite{Erdos} gave a probabilistic construction for graphs. 
Lov\'asz \cite{Lo} had a deterministic, but huge construction for hypergraphs. 
Ne\v{s}et\v{r}il and R\"odl gave a short probabilistic construction \cite{NR} 
for hypergraphs, see also Duffus, R\"odl, Sands and Sauer\cite{DRSS}. 
Feder and Vardi showed \cite{FV} a more 
general statement known as Sparse Incomparability Lemma: They proved that for 
every CSP problem there is a randomized, poly-time algorithm that transforms 
every input structure of the CSP to an equivalent one of large girth. 

Ne\v{s}et\v{r}il and Matou\v{s}ek gave a deterministic poly-time algorithm 
\cite{MN} to this in case of graphs. This was simplified in the recent work of 
Ne\v{s}et\v{r}il and Siggers \cite{NS}. The main result of this paper is a 
deterministic, poly-time algorithm for the Sparse Incomparability Lemma for
general CSP's. \footnote{We will follow the terminology of relational 
structures, but our theorems hold in the special case of hypergraphs, too.}

\begin{theorem}~\label{reduction}(Algorithm)
Let $t,k$ be positive integers and $\tau$ a finite relational type.
For every structure ${\bf S}$ of type $\tau$ there exists a polynomial 
time constructible structure ${\bf S'}$ of type $\tau$ with girth $>k$ 
such that for every structure ${\bf T}$ of size $<t$ the equivalence 
${\bf S} \in CSP({\bf T}) \iff {\bf S'} \in CSP({\bf T})$ holds.
Moreover, ${\bf S'} \in CSP({\bf S})$.
\end{theorem}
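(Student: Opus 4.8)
The plan is to realize $\mathbf{S'}$ as a \emph{twisted product} $\mathbf{S}\otimes\mathbf{E}$ of $\mathbf{S}$ with an explicitly constructed expander, in analogy with the zig-zag product. First I would fix a $d$-regular expander $\mathbf{E}$ on a vertex set $V$, where $d$ is an absolute constant and the spectral ratio $\lambda_2(\mathbf{E})/d$ is pushed below a threshold depending only on $t$ and the maximal arity $a$ of $\tau$ --- for instance a Ramanujan graph $\mathrm{LPS}(p,q)$ with $p$ a fixed prime large enough that $2/\sqrt{p}<1/(ta)$ and $q$ chosen so that $|V|$ is polynomial in $|\mathbf{S}|$; since the girth of such a graph grows logarithmically in $|V|$, it exceeds $k$ once $|V|$ is large enough, and these graphs are poly-time computable. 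The universe of $\mathbf{S'}$ will be $S\times V$ (or a large subset of it), and the tuples of $\mathbf{S'}$ are obtained from the tuples of $\mathbf{S}$ by routing each coordinate through the rotation map of $\mathbf{E}$, the twisting being set up precisely so that the first-coordinate projection $\pi\colon(s,v)\mapsto s$ is a homomorphism $\mathbf{S'}\to\mathbf{S}$. This at once gives $\mathbf{S'}\in CSP(\mathbf{S})$, and hence $\mathbf{S}\in CSP(\mathbf{T})\Rightarrow\mathbf{S'}\in CSP(\mathbf{T})$ for every $\mathbf{T}$, by composing a homomorphism $\mathbf{S}\to\mathbf{T}$ with $\pi$; this is the easy half of the equivalence and needs no bound on $|\mathbf{T}|$.

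Next I would check that $\mathbf{S'}$ has girth $>k$. A short cycle in the incidence bipartite graph of $\mathbf{S'}$ (through elements and tuples) would, after the routing is applied, project to a short closed non-backtracking walk in $\mathbf{E}$ that the twisting forces to be non-trivial; since $\mathbf{E}$ has girth exceeding $k$, no such walk exists. This is a local combinatorial check against the definition of the twisted product, and I expect it to be routine once the rotation-map bookkeeping is in place; the only care needed is to fix the right notion of ``cycle'' for a relational structure.

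The heart of the matter, and the step I expect to be the main obstacle, is the converse direction: given a homomorphism $h\colon\mathbf{S'}\to\mathbf{T}$ with $|T|<t$, I must produce a homomorphism $g\colon\mathbf{S}\to\mathbf{T}$. The idea is majority decoding along the fibres of $\pi$: for each $s\in S$ the restriction of $h$ to $\pi^{-1}(s)\cong V$ splits $V$ into fewer than $t$ colour classes, one of which --- call it $C_s$ --- has density larger than $1/t$, and I set $g(s)$ to that colour. To see that $g$ respects a tuple $(s_1,\dots,s_r)\in R^{\mathbf{S}}$, recall that the twisted product put into $R^{\mathbf{S'}}$ a whole expander-pseudorandom family of tuples $\bigl((s_1,v_1),\dots,(s_r,v_r)\bigr)$ lying over it; an iterated version of the expander mixing lemma --- which is precisely what the notion of ``expander relations'' is meant to package, and which the twisted product makes available even when $r>2$ --- guarantees that this family meets $C_{s_1}\times\dots\times C_{s_r}$, since each $C_{s_i}$ is dense and $\lambda_2(\mathbf{E})/d$ was taken small compared with $1/t$ and $1/a$. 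As $h$ is a homomorphism, the image of that tuple lies in $R^{\mathbf{T}}$, so $(g(s_1),\dots,g(s_r))\in R^{\mathbf{T}}$, and $g$ is the homomorphism we wanted. The delicate points are: (i) proving the mixing estimate for ``expander relations'' in a form that survives the twisting and holds both inside a single fibre and across the whole structure at once, so that one $\mathbf{E}$ serves all $\mathbf{T}$ with $|T|<t$; (ii) the higher-arity version of the estimate --- bounding the set of ``bad centres'' whose routed neighbourhood misses some dense $C_{s_i}$, then a union bound over the $r\le a$ coordinates; and (iii) pinning $d$, $\lambda_2/d$ and $|V|$ simultaneously so that the girth bound and the mixing bounds both hold while $\mathbf{S'}$ stays of polynomial size. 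Granting these, polynomial-time constructibility is immediate, since the expander and the twisted product are each computable in polynomial time.
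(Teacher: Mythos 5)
Your overall architecture (twisted product of ${\bf S}$ with an explicit expander, projection giving ${\bf S'} \in CSP({\bf S})$, majority decoding along fibres plus a higher-arity mixing lemma for the converse) matches the paper's, but there is a genuine gap at exactly the step you call routine: the girth of ${\bf S'}$. It is not true that a short cycle of ${\bf S'}$ projects to a short \emph{non-backtracking} closed walk in ${\bf E}$. The input ${\bf S}$ may itself have girth $1$ or $2$, and with any fixed routing rule (rotation maps, or ``all walks in ${\bf E}$'' over each tuple) those short cycles lift: if two distinct tuples of ${\bf S}$ share two elements $s_1,s_2$, then choosing the same edge (or walk) of ${\bf E}$ over both of them produces two distinct tuples of ${\bf S'}$ sharing the two elements $(s_1,v_1),(s_2,v_2)$ --- a cycle of length $2$ in ${\bf S'}$ whose ${\bf E}$-projection is a backtracking walk, so the girth of ${\bf E}$ (however large) is irrelevant. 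Degenerate cycles (tuples with repeated coordinates) survive similarly. This is precisely why the paper's central technical result is not the mixing estimate but Theorem~\ref{karcsu}: large girth in a twisted product is \emph{achieved}, not inherited, by iteratively modifying the twisting bijections with transpositions $(a\,a')$ swapping fibre elements at distance $\geq k$, with a careful counting argument (the ``cutting pair'' claim) showing the number of shortest cycles strictly decreases. That argument needs the \emph{first} factor to be a relational structure ${\bf A}$ of the same type $\tau$ with girth $\geq k$ and the degree condition $|A|^{1/k} > \Delta({\bf A})\Delta({\bf S})$; constructing such an ${\bf A}$ that is simultaneously an $\varepsilon$-expander, bounded degree, and poly-time computable is itself Theorem~\ref{alg} (a brute-force search over structures of size $O(\log n)$ justified by a probabilistic existence lemma, combined with the walk-power of a graph expander via Lemma~\ref{graph2structure}, glued by another twisted product). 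Nothing in your proposal plays the role of this cycle-destroying mechanism or of the auxiliary high-girth structure, so your ${\bf S'}$ need not have girth $>k$.

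Two smaller points. First, your quantitative threshold is off: majority decoding gives fibre classes of density $>1/t$, so for an $r$-ary symbol you need the expander error $\varepsilon$ (after the arity-lifting loss, roughly $(r-1)\lambda/d$) below $t^{-r}$, as in the paper's choice $\varepsilon = 1/(t^r+1)$ in Lemma~\ref{exphely}; a bound like $\lambda_2/d < 1/(ta)$ is too weak. Second, the converse direction you outline is essentially the paper's Lemma~\ref{exphely} and is fine as a sketch, provided the mixing is stated for the $r$-ary expander \emph{relation} of the first factor (so that it is automatically preserved by any choice of twisting bijections), not for the graph ${\bf E}$ directly; the walk construction you would need for arity $r>2$ is the paper's Lemma~\ref{graph2structure}, which indeed survives twisting because the expander property is a property of ${\bf A}$ alone.
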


This theorem also answers a problem posed by Ne\v{s}et\v{r}il, Kostochka 
and Smol\'{\i}kova \cite{KNS}. However, our work was primarily motivated 
by the paper of Feder and Vardi \cite{FV} on the dichotomy conjecture: They 
analyzed the complexity classes CSP and Monotone Monadic Strict NP (MMSNP). 
The class MMSNP contains the class CSP, and it has much bigger
expressive power. Feder and Vardi proved that
these classes are equivalent in a random sense. (For more on these classes 
and a simple proof see \cite{KNEJC}.) The only random part 
in their algorithm comes from their probabilistic proof of the Sparse 
Incomparability Lemma, so we can derandomize their result using 
Theorem~\ref{reduction}.

\begin{theorem}~\label{CSP=MMSNP}
Let $\tau$ be a finite relational type, $L \subseteq Rel(\tau)$ an
$MMSNP$ language. Then there is a finite set of relational
structures $\mathcal{T} \subset Rel(\tau)$ such that

\begin{enumerate}

\item $L$ has a polynomial time reduction to $CSP(\mathcal{T})$.

\item $CSP(\mathcal{T})$ has a polynomial time reduction to $L$.

\end{enumerate}

\end{theorem}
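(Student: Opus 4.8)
The plan is to carry out the reductions of Feder and Vardi~\cite{FV} between $MMSNP$ and $CSP$ (see also the simplified account in~\cite{KNEJC}) and to replace their probabilistic Sparse Incomparability Lemma, the sole source of randomness in their argument, by the deterministic construction of Theorem~\ref{reduction}. I would begin by recalling the normal form. Given an $MMSNP$ sentence defining $L$, a Ramsey-type recolouring of the monadic predicates eliminates the inequalities, at the cost of enlarging the signature and the forbidden patterns by amounts bounded in terms of the sentence, and yields an $\neq$-free $MMSNP$ problem polynomial-time equivalent to $L$; splitting each forbidden pattern into its connected components and distributing over the finitely many ways of choosing one component per pattern then exhibits the problem as a finite union of problems defined by $\neq$-free sentences all of whose forbidden patterns are connected. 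Each such problem is closed under homomorphisms and coincides, as a class of structures, with the constraint satisfaction problem of a template obtained as the direct limit of the finite structures over the enlarged signature that avoid the connected forbidden patterns. From the analysis of these templates Feder and Vardi extract the finite family $\mathcal T\subset Rel(\tau)$ of the statement --- one finite $\tau$-structure per choice function --- which can moreover be chosen so that every member of $\mathcal T$ satisfies the original sentence, so that in particular $CSP(\mathcal T)\subseteq L$.

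The Sparse Incomparability Lemma enters~\cite{FV} to control the girth of the structures on which their combinatorial analysis operates, and its randomness is exactly what Theorem~\ref{reduction} removes. I would fix the girth bound $k$ in terms of the sizes of the connected forbidden patterns of the normal form, and a size bound $t$ larger than those patterns, than $|{\bf T}|$ for every ${\bf T}\in\mathcal T$, and than the bound coming from the Ramsey step; then, wherever~\cite{FV} produces a random sparse structure associated with an input ${\bf S}$, I would substitute the structure ${\bf S'}$ of Theorem~\ref{reduction}: it is polynomial-time constructible, has girth $>k$, satisfies ${\bf S'}\in CSP({\bf S})$, and satisfies ${\bf S}\in CSP({\bf U})\iff{\bf S'}\in CSP({\bf U})$ for every structure ${\bf U}$ of size $<t$. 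These are precisely the properties their argument uses --- indeed Theorem~\ref{reduction} is a strengthening of the Sparse Incomparability Lemma, since it additionally guarantees the homomorphism ${\bf S'}\to{\bf S}$, and $\neq$-free $MMSNP$ is closed under homomorphisms --- so making the substitution throughout, and in both directions, yields the two polynomial-time reductions claimed.

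The step I expect to be the real work is not conceptual but organisational: one must go through~\cite{FV} and verify that every appeal to a random object is an instance of the Sparse Incomparability Lemma (the authors assert this, and the point of the present construction is to make it cash out), isolate the parameters --- girth and template size --- that each instance demands, and check that a single pair $(k,t)$, fixed in advance from the $MMSNP$ sentence as above, serves all of them simultaneously and is compatible with the Ramsey recolouring and the component decomposition. Apart from this interfacing, Theorem~\ref{CSP=MMSNP} is the Feder--Vardi equivalence with its one non-constructive ingredient supplied by Theorem~\ref{reduction}.
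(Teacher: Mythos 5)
Your proposal is correct and is essentially the paper's own argument: the paper simply quotes the Feder--Vardi theorem in its deterministic, girth-restricted form ($L$ reduces to $CSP(\mathcal{T})$, and $CSP(\mathcal{T})$ restricted to structures of girth at least $k$ reduces to $L$) and composes that second reduction with the girth-increasing map ${\bf S}\mapsto{\bf S'}$ of Theorem~\ref{reduction}, with $t$ chosen larger than the sizes of the members of $\mathcal{T}$. You do not need to re-open the internals of \cite{FV} (and incidental side claims in your sketch, such as $CSP(\mathcal{T})\subseteq L$, are neither needed nor clearly true); citing the girth-restricted statement as a black box, exactly as the paper does, already finishes the proof.
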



Note that the equivalence of the complexity classes CSP and MMSNP
does not only mean that both of these classes contain an NP-complete
problem. In particular, Theorem~\ref{CSP=MMSNP} shows that if
dichotomy holds for CSP then it also holds for MMSNP. 


Ne\v{s}et\v{r}il and Matou\v{s}ek used expander graphs to give a poly-time 
algorithm for the Sparse Incomparability Lemma in the case of graphs.
Expander graphs are sparse but highly connected graphs. These 
play an important role in number theory, group theory and graph theory.  
Ajtai, Koml\'os and Szemer\'edi used expanders in their paper on parallel 
sorting \cite{AKSz}: this was the first time when expanders were used in
computer science. "Optimal expander graphs", Ramanujan graphs were 
constructed by Margulis \cite{M} and independently by Lubotzky, 
Phillips and Sarnak \cite{LPS}. Simpler and simpler constructions were 
found in the last decade \cite{ASS, RVW}. Recently Lubotzky, Samuels and 
Vishne introduced a concept of Ramanujan complexes \cite{LSV,LSV2}.

On the other hand, for relational structures (hypergraphs) no similar
construction or even definition is known. We introduce a concept of
{\it expander relations}. We say that the $r$-ary relation $R$ on
$S$ is an $\varepsilon$-expander relation if for every $S_1, \dots ,S_r
\subseteq S$ the number of relational tuples with the $i$th
coordinate in $S_i$ differs by less than $\varepsilon |R|$ from the
expected value. We construct $\varepsilon$-expander relational structures 
with large girth and bounded degree in poly-time.

\begin{theorem}\label{alg}(Algorithm)
Let $\tau$ be a finite relational type, $k$ a positive integer and
$\varepsilon>0$. Then for every $n>n_{\tau,\varepsilon,k}$ there 
exists a polynomial time constructible $\varepsilon$-expander 
${\bf S}$ of size $n$, type $\tau$, maximal degree at most  
$M=M_{\tau,\varepsilon}$ and girth at least $k$.
\end{theorem}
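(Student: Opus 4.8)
The plan is to give a direct randomized construction on a common universe, handling all relation symbols of $\tau$ at once, and then to derandomize it. Fix the universe $S=\{1,\dots,n\}$ and an integer parameter $D=D_{\tau,\varepsilon}$ to be chosen. For each relation symbol $R$ of $\tau$, of arity $r$, choose independent uniformly random permutations $\sigma^{(j)}_1,\dots,\sigma^{(j)}_{r-1}\in\mathrm{Sym}(S)$ for $j=1,\dots,D$ and put
\[
  R^{\bf S}=\bigl\{\,(a,\sigma^{(j)}_1(a),\dots,\sigma^{(j)}_{r-1}(a))\ :\ a\in S,\ 1\le j\le D\,\bigr\}.
\]
Every element occurs exactly $D$ times in each coordinate, so $R^{\bf S}$ is coordinate-regular, $|R^{\bf S}|=Dn$, every element has degree at most $rD$, and the expected value of $|R^{\bf S}\cap(S_1\times\cdots\times S_r)|$ is exactly $|R^{\bf S}|\prod_i|S_i|/n$. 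Once the permutations are written down the structure is polynomial-time constructible, with maximal degree at most $M=M_{\tau,\varepsilon}:=r_{\max}D$, which does not depend on $n$ or $k$.

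The expansion property holds with high probability. Fix $S_1,\dots,S_r\subseteq S$ and write $X=|R^{\bf S}\cap\prod_iS_i|=\sum_{j=1}^{D}\sum_{a\in S}\mathbf 1[a\in S_1]\prod_{i=1}^{r-1}\mathbf 1[\sigma^{(j)}_i(a)\in S_{i+1}]$, so $\mathbb E X=|R^{\bf S}|\prod_i|S_i|/n$. Revealing the values $\sigma^{(j)}_i(1),\sigma^{(j)}_i(2),\dots$ one at a time, the Doob martingale of $X$ has increments bounded by an absolute constant: a revealed value lies in a single tuple, and, by the sampling-without-replacement dependence, it shifts the conditional inclusion probability of every other tuple in its block by $O(1/n)$. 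The Azuma--McDiarmid inequality for permutations therefore gives $\Pr\bigl[\,|X-\mathbb E X|\ge\tfrac\varepsilon2|R^{\bf S}|\,\bigr]\le 2\exp(-c\,\varepsilon^2 Dn/r^2)$ for an absolute $c>0$. Choosing $D=D_{\tau,\varepsilon}$ large enough that $c\,\varepsilon^2D>r_{\max}^2$ and union bounding over the $|\tau|$ relation symbols and the at most $2^{r_{\max}n}$ choices of $(S_1,\dots,S_r)$ shows that with probability $1-o(1)$ every relation of ${\bf S}$ is an $\tfrac\varepsilon2$-expander relation.

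For the girth, a first-moment computation (each additional coincidence among the coordinates of the chosen tuples costs a factor $\Theta(1/n)$) shows that for each fixed $\ell<k$ the expected number of cycles of length $\ell$ in the incidence structure of ${\bf S}$ — including cycles that use tuples of several distinct relations — is $O(1)$; hence with probability $1-o(1)$ there are at most $c_{\tau,\varepsilon,k}=O(1)$ cycles of length $<k$ in total. Deleting one tuple from each of them destroys all short cycles, keeps the universe of size exactly $n$, only lowers the degree, and perturbs every $|R^{\bf S}|$ and every count $|R^{\bf S}\cap\prod_iS_i|$ by $O(1)\ll\varepsilon|R^{\bf S}|$. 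Thus, for all $n>n_{\tau,\varepsilon,k}$, the surviving structure is an $\varepsilon$-expander of girth at least $k$, maximal degree at most $M$ and size exactly $n$, which establishes the existence statement.

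The main obstacle is the derandomization: the event ``${\bf S}$ is an $\tfrac\varepsilon2$-expander of girth $\ge k$'' is an intersection of roughly $2^{r_{\max}n}$ atomic bad events, so the naive method of conditional expectations cannot evaluate a pessimistic estimator such as $\sum_{(S_i)}\Pr[\mathrm{bad}_{(S_i)}\mid\text{partial choice of the }\sigma^{(j)}_i]$ in polynomial time. I would attack this either by replacing the fully random permutations with permutations drawn from an explicitly constructible limited-independence (or small-bias) family of polynomial size that provably reproduces the concentration bound above for all the threshold events, and then exhausting that family; or, alternatively, by giving an explicit iterative construction in the spirit of the Reingold--Vadhan--Wigderson zig-zag construction — starting from a fixed constant-size expander-relation gadget found by brute force and growing it while controlling degree and expansion by a relational product, which is precisely the role of the twisted product introduced in this paper. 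Either way, once a good deterministic choice is fixed, forming the tuples, enumerating the $O(1)$ short cycles and deleting one tuple from each is clearly polynomial time.
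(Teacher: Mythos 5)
Your randomized construction is fine as an existence argument, and in fact it closely parallels the paper's own probabilistic Lemma~\ref{random} (you use a permutation/union-of-matchings model with a McDiarmid bound where the paper uses independent tuples with a Chernoff bound and deletes elements in short cycles rather than tuples; both work, modulo the typo $|R|\prod_i|S_i|/n$ for $|R|\prod_i|S_i|/n^r$). But the theorem being proved is the \emph{algorithmic} statement, and that is exactly the part your proposal does not deliver. You explicitly identify derandomization as ``the main obstacle'' and then only list two possible strategies without carrying either out. The limited-independence/small-bias route is speculative: you would have to exhibit an explicit polynomial-size family of permutations for which the exponential concentration holds simultaneously for all $2^{\Theta(n)}$ tuples of sets $(S_1,\dots,S_r)$ \emph{and} for which the short-cycle count stays $O(1)$, and no such family or proof is given. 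The second route you mention is, as you note, the paper's actual plan, but naming it is not the same as executing it; the substance of that plan lies in three results you do not prove: that a twisted product of an $\varepsilon_A$-expander and an $\varepsilon_B$-expander is an $(\varepsilon_A+\varepsilon_B)$-expander (Lemma~\ref{expszor}), that one can convert an explicit spectral expander graph into an expander \emph{relation} of each needed arity via the path relation (Lemma~\ref{graph2structure}), and, crucially, that when the small factor ${\bf A}$ has girth $\geq k$ and $|A|^{1/k}>\Delta({\bf A})\Delta({\bf B})$ one can choose the twisting bijections in polynomial time so that the product itself has girth $\geq k$ (Theorem~\ref{karcsu}, proved by iteratively composing the bijections with transpositions and counting short cycles via cutting pairs).

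Note also how the paper sidesteps the derandomization issue you got stuck on: it never derandomizes the probabilistic lemma at scale $n$. It applies Lemma~\ref{random} only to a structure of size $O(\log n)$, which can then be found by brute-force search in time polynomial in $n$; the size-$n$ expansion comes deterministically from known explicit Ramanujan-type graphs through Lemma~\ref{graph2structure}, and the two are combined by the twisted product, which inherits expansion from both factors and girth from the small factor. So the gap in your proposal is not a technicality but the entire constructive content of the theorem: without the twisted-product machinery (or a genuinely worked-out pseudorandom-permutation argument), the polynomial-time constructibility claim remains unproved.
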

 
In order to give this construction we define the {\it twisted product} 
of relational structures, a generalization of the so-called zig-zag 
product used by Reingold, Vadhan and Widgerson \cite{RVW}. Alon, Schwartz 
and Shapira used a similar product called replacement product in their 
expander construction \cite{ASS}.

In Section 2 we give the basic definitions. Section 3 contains the
novelties of this paper: the definition and properties of
expander relational structures and the twisted product. In Section 4 we  
construct expander relational structures with large girth and
bounded degree. In Section 5 we prove Theorem~\ref{reduction} and 
Theorem~\ref{CSP=MMSNP}.

\section{Definitions, notations}

We will work with finite relational structures throughout this paper: 
we denote these by boldface letters ${\bf A,B,C}, \dots$ and their base 
set by $A,B,C, \dots$, respectively. For an $r$-ary relational symbol $R$
and relational structure ${\bf A}$ with base set $A$ let $R=R({\bf A})$       
denote the set of tuples of ${\bf A}$ which are
in relation $R$. Recall, that a homomorphism is a mapping which
preserves all relations. Just to be explicit, for relational
structures ${\bf A,\bf B}$ of the same type $\tau$ a mapping $f: A
\longrightarrow B$ is a {\it homomorphism} ${\bf A} \longrightarrow
\bf B$ if for every $r$-ary relational symbol $R \in \tau$ and
$(x_1, \ldots, x_r) \in A^r$ the implication $(x_1, 
\ldots, x_r) \in R(\bf A) \longrightarrow 
(f(x_1), \dots, f(x_r)) \in R(\bf B)$ holds. 
A {\it cycle} in a relational structure 
${\bf A}$ is either a minimal sequence of distinct points and distinct
tuples $x_0, r_1, x_1, \ldots, r_t, x_t = x_0$ where $t \geq 2$, each 
tuple $r_i$ belongs to one of the relations $R(\bf A)$ and each $x_i$ 
is a coordinate of $r_i$ and $r_{i+1}$, or, in the degenerate case, a 
relational tuple with at least one multiple coordinate. The
{\it length} of the cycle is the integer $t$ in the first case and
$1$ in the second case.
The {\it girth} of a structure $\bf A$ is the length of the shortest
cycle in $\bf A$ (if it exists; otherwise it is a forest and we 
define the girth to be infinity). The
degree of an element $x$ of ${\bf S}$ is the number of relational
tuples containing $x$ (with multiplicity). Denote the maximal degree
in $\bf S$ by $\Delta(\bf S)$. Given a function $f:S \rightarrow
\mathbb{R}$ let $|f|=\sum_{x \in S} |f(x)|$ denote its first norm
and $max (f)$ its maximum, respectively.

For the relational structure ${\bf A}$ set $CSP({\bf A}) = \{ {\bf
B}: {\bf B} \text{ is homomorphic}$ \\ to ${\bf A} \}$. 
For a finite set $\mathcal{T}$ of relational structures of the same type 
define $CSP(\mathcal{T})= \cup_{{\bf A} \in \mathcal{T}} CSP({\bf A})$.
The class CSP consists of languages of the form $CSP(\mathcal{T})$. 


\section{Expander relations and the twisted product}

\begin{definition}
Given a finite relational structure ${\bf A}$, a relation $R
\subseteq A^r$ and functions $f_1, \dots ,f_r: A
\rightarrow \mathbb{R}$ let us denote the sum

$\begin{displaystyle}
\sum_{(x_1, \dots ,x_r) \in R({\bf A})} \prod_{i=1}^r f_i(x_i)
\end{displaystyle}$ 

\noindent
by $R(f_1, \dots ,f_r)$.
For the subsets $S_1, \dots ,S_r \subseteq A$ set
$R(S_1, \dots ,S_r) = R(\chi_{S_1}, \dots ,\chi_{S_r})$. 
This equals the number of $r$-tuples $(x_1, \dots ,x_r) \in R$
such that $x_1 \in S_1, \dots ,x_r \in S_r$.
\end{definition}


\begin{definition}
A nonempty $r$-ary relation $R \subseteq S^r$ is called
an $\varepsilon$-expander relation if for every 
$S_1, \dots ,S_r \subseteq S$ the inequality

\noindent $\Big| R(S_1, \dots ,S_r) - |R| \frac{\prod_{i=1}^r
|S_i|}{|S|^r} \Big| \leq \varepsilon |R|$ holds.

\noindent A relational structure ${\bf S}$ is a $(\Delta, \varepsilon)$
-expander relational structure if every at least binary
relation of ${\bf S}$ is an $\varepsilon$-expander relation and
$\Delta({\bf S}) \leq \Delta$.
\end{definition}

An expander graph is an expander relational structure: this is a 
trivial consequence of the Expander Mixing Lemma \cite{NatiAvi}.
We postpone the study of such expanders until Section 4. Now we 
give several equivalent definitions for expander relations.

\begin{lemma}~\label{eqdef}
For a finite $r$-ary relation $R \subseteq S^r$ the followings are
equivalent.
\begin{enumerate}

\item For every $f_1, \dots ,f_r: S \rightarrow [0;\infty)$,

\noindent $\Big| R(f_1, \dots ,f_r) - |R| \frac{\prod_{i=1}^k
|f_i|}{|S|^r} \Big| \leq \varepsilon |R| \prod_{i=1}^r max (f_i)$
holds.

\item For every $f_1, \dots ,f_r: S \rightarrow [0;1]$,

\noindent $\Big| R(f_1, \dots ,f_r) - |R| \frac{\prod_{i=1}^r
|f_i|}{|S|^r} \Big| \leq \varepsilon |R|$ holds.

\item $R$ is an $\varepsilon$-expander relation.
\end{enumerate}
\end{lemma}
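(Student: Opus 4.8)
The plan is to prove the cycle of implications $(1) \Rightarrow (2) \Rightarrow (3) \Rightarrow (1)$. The implications $(1) \Rightarrow (2)$ and $(2) \Rightarrow (3)$ are immediate specializations: for $(1) \Rightarrow (2)$, restrict to functions taking values in $[0;1]$, where $max(f_i) \leq 1$ so the right-hand side $\varepsilon |R| \prod max(f_i)$ is bounded by $\varepsilon |R|$; for $(2) \Rightarrow (3)$, further restrict to characteristic functions $f_i = \chi_{S_i}$, noting that $|\chi_{S_i}| = |S_i|$ and $R(\chi_{S_1}, \dots, \chi_{S_r}) = R(S_1, \dots, S_r)$ by the first definition. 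So the only real content is $(3) \Rightarrow (1)$.

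For $(3) \Rightarrow (1)$, I would argue by a multilinearity-and-approximation scheme. First observe that both sides of the inequality in $(1)$ scale the same way: replacing $f_i$ by $\lambda_i f_i$ for $\lambda_i \geq 0$ multiplies $R(f_1,\dots,f_r)$, the term $|R|\prod|f_i|/|S|^r$, and $|R|\prod max(f_i)$ each by $\prod \lambda_i$. Hence it suffices to prove $(1)$ under the normalization $max(f_i) \leq 1$, i.e. to prove $(2)$ directly from $(3)$ (one handles the degenerate case $max(f_i)=0$, i.e. $f_i \equiv 0$, separately and trivially). Now I would express a function $f \colon S \to [0;1]$ via its superlevel sets: writing $S^f_\theta = \{x : f(x) \geq \theta\}$, we have the layer-cake identity $f(x) = \int_0^1 \chi_{S^f_\theta}(x)\, d\theta$. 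Substituting this for each $f_i$ and using multilinearity of $R(\cdot,\dots,\cdot)$ in its arguments gives
\[
R(f_1, \dots, f_r) = \int_{[0;1]^r} R\big(S^{f_1}_{\theta_1}, \dots, S^{f_r}_{\theta_r}\big)\, d\theta_1 \cdots d\theta_r,
\]
and likewise $|R|\prod_i |f_i|/|S|^r = \int_{[0;1]^r} |R|\prod_i |S^{f_i}_{\theta_i}|/|S|^r \, d\theta$, since $|f_i| = \int_0^1 |S^{f_i}_\theta|\, d\theta$. Subtracting, moving the absolute value inside the integral, and applying hypothesis $(3)$ to each tuple of level sets bounds the integrand by $\varepsilon|R|$ pointwise, and the domain $[0;1]^r$ has measure $1$, yielding $\big|R(f_1,\dots,f_r) - |R|\prod|f_i|/|S|^r\big| \leq \varepsilon|R|$, which is exactly $(2)$.

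The main obstacle is making the interchange of the (finite, but conveniently written as integral) sum defining $R(f_1,\dots,f_r)$ with the layer-cake integrals fully rigorous, and handling the multilinear expansion cleanly: when one expands the product $\prod_{i=1}^r \big(\int_0^1 \chi_{S^{f_i}_{\theta_i}}(x_i)\, d\theta_i\big)$ inside the sum over tuples, one must justify turning a product of integrals into an iterated integral over $[0;1]^r$ (Fubini/Tonelli, legitimate since everything is nonnegative and bounded) and then swapping that with the finite sum over $R({\bf A})$. Alternatively, to avoid integrals entirely one can give a discrete version: approximate each $f_i$ from below by step functions $\frac{1}{N}\sum_{j=1}^{N} \chi_{S^{f_i}_{j/N}}$, apply multilinearity and $(3)$ to the resulting finitely many level-set tuples, and let $N \to \infty$; the approximation error is $O(1/N)$ uniformly, so it washes out. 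Either route is routine once set up; the conceptual point is simply that an $\varepsilon$-expander bound for $\{0,1\}$-valued test functions upgrades automatically to $[0;1]$-valued and then, by homogeneity, to arbitrary nonnegative test functions.
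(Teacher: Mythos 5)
Your proposal is correct and follows essentially the same route as the paper: the paper also handles $(1)\Rightarrow(2)\Rightarrow(3)$ as trivial specializations and proves $(3)\Rightarrow(1)$ by the layer-cake decomposition of each $f_i$ into superlevel sets, pushing the absolute value inside the integral and applying the expander bound pointwise. The only cosmetic difference is that the paper integrates each $y_i$ over $[0, \max(f_i)]$ to pick up the factor $\prod_i \max(f_i)$ directly, whereas you first normalize to $[0;1]$-valued functions and recover $(1)$ by homogeneity.
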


\begin{proof}
The implication $(1) \rightarrow (2)$ is trivial. $(3)$ is the
special case of $(2)$ when all the functions $f_i$ are
characteristic functions. We have to prove $(3) \rightarrow
(1)$: 

\noindent
$\Big| R(f_1, \dots ,f_r) - \prod_{i=1}^r |f_i| \frac{|R|}{|S|^r} \Big|= \\
\Big| \int_{y_1=0}^{max (f_1)} \dots \int_{y_r=0}^{max (f_r)} \sum_{
(e_1,\dots ,e_r) \in R} \prod_{i=1}^r \chi_{ \{y_i<f_i(e_i) \} }
d y_1 \dots d y_r - \\
\int_{y_1=0}^{max (f_1)} \dots \int_{y_r=0}^{max (f_r)}
\frac{|R|}{|S|^r} \prod_{i=1}^r \Big( \sum_{s \in S} \chi_{ \{
y_i<f_i(s) \} } \Big)
d y_1 \dots d y_r \Big| \leq \\
\int_{y_1=0}^{max (f_1)} \dots \int_{y_r=0}^{max (f_r)}
\Big| R(\{s: y_1<f_1(s) \}, \dots ,\{ s: y_r<f_r(s) \} )- \\
\frac{|R|}{|S|^r} \prod_{i=1}^r | \{ s: y_i<f_i(s) \} | \Big| d y_1 
\dots d y_r \leq \\
\int_{y_1=0}^{max (f_1)} \dots \int_{y_r=0}^{max (f_r)} \varepsilon
|R| d y_1 \dots d y_r = \varepsilon |R| \prod_{i=1}^r max (f_i)$.
\end{proof}

\begin{definition}
Let ${\bf A}$ and ${\bf B}$ be relational structures of type $\tau$.
We say that ${\bf C}$ is a twisted product of ${\bf A}$ and ${\bf
B}$ if the followings hold.

\begin{enumerate}

\item
The base set of ${\bf C}$ is the product set: $C=A \times B$.

\item
The projection $\pi_B: A \times B \rightarrow B$ is a homomorphism
$\bf C \rightarrow B$.

\item
For every $r$-ary relational symbol $R$ of type $\tau$, $1 \leq i
\leq r$ and relational tuple $t=(t_1, \dots ,t_r) \in R({\bf B})$
there exists a bijection $\alpha_{t,i}: A \rightarrow C$ such that
$\pi_B \circ \alpha_{t,i} =t_i$ and $(x_1, \dots ,x_r) \in R({\bf
A}) \iff \big( \alpha_{t,1}(x_1), \dots ,\alpha_{t,r}(x_r) \big) \in
R({\bf C})$.

\end{enumerate}
\end{definition}

If all the bijections in the definition are identical we get the 
direct product ${\bf A} \times {\bf B}$. 
In the case of simple, undirected graphs the last condition means
that the preimage of every edge in ${\bf B}$ is isomorphic to the
direct product of ${\bf A}$ and an edge. The celebrated zig-zag
product \cite{RVW} is a very special case (e.g. ${\bf A}$ is a
complete graph with loops). Two structures may have many different
twisted products: we can choose many bijections freely.
However, every twisted product of two expanders is an expander.

\begin{lemma}~\label{expszor}
Consider an $\varepsilon_A$-expander ${\bf A}$ and an
$\varepsilon_B$-expander ${\bf B}$ of type $\tau$. If ${\bf C}$ is
the twisted product of $A$ and $B$ then $C$ is an $(\varepsilon_A +
\varepsilon_B)$-expander. And $\Delta({\bf A}) \Delta({\bf B}) \geq
\Delta({\bf C})$ holds for the maximal degrees.
\end{lemma}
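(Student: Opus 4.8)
The plan is to reduce the expander condition for $\mathbf{C}$ to the expander conditions for $\mathbf{A}$ and $\mathbf{B}$ by fibering over $\mathbf{B}$. Fix an at least binary relational symbol $R$ of arity $r$ and subsets $T_1,\dots,T_r \subseteq C = A\times B$. For a tuple $t=(t_1,\dots,t_r)\in R(\mathbf{B})$, condition (3) of the twisted product gives bijections $\alpha_{t,i}\colon A\to C$ with $\pi_B\circ\alpha_{t,i}=t_i$ and an $R(\mathbf{A})$-to-$R(\mathbf{C})$ correspondence. The key observation is that $R(\mathbf{C})$ decomposes as a disjoint union over $t\in R(\mathbf{B})$ of the images of $R(\mathbf{A})$ under $(\alpha_{t,1},\dots,\alpha_{t,r})$: indeed $\pi_B$ is a homomorphism (condition (2)), so every tuple of $R(\mathbf{C})$ projects to some $t\in R(\mathbf{B})$, and the fiber over $t$ in the $i$th coordinate is exactly $\{c\in C:\pi_B(c)=t_i\}$, which $\alpha_{t,i}$ identifies bijectively with $A$. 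Hence $|R(\mathbf{C})| = |R(\mathbf{B})|\cdot|R(\mathbf{A})|$, and more importantly
\[
R(\mathbf{C})(T_1,\dots,T_r)=\sum_{t\in R(\mathbf{B})} R(\mathbf{A})\bigl(\alpha_{t,1}^{-1}(T_1),\dots,\alpha_{t,r}^{-1}(T_r)\bigr).
\]

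Next I would apply the $\varepsilon_A$-expander property of $\mathbf{A}$ to each summand. Writing $a_i(t)=|\alpha_{t,i}^{-1}(T_i)|$, which by the fiber description equals $|\{c\in T_i:\pi_B(c)=t_i\}|$, we get
\[
\Bigl|R(\mathbf{C})(T_1,\dots,T_r) - \frac{|R(\mathbf{A})|}{|A|^r}\sum_{t\in R(\mathbf{B})}\prod_{i=1}^r a_i(t)\Bigr| \le \varepsilon_A\,|R(\mathbf{A})|\,|R(\mathbf{B})| = \varepsilon_A\,|R(\mathbf{C})|.
\]
It remains to show that the main term $\frac{|R(\mathbf{A})|}{|A|^r}\sum_{t}\prod_i a_i(t)$ is within $\varepsilon_B|R(\mathbf{C})|$ of the target value $|R(\mathbf{C})|\prod_i|T_i|/|C|^r$. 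This is where the $\varepsilon_B$-expander property of $\mathbf{B}$ enters. For $s\in B$ let $f_i(s)=|\{c\in T_i:\pi_B(c)=s\}|\in[0;|A|]$, so that $\prod_i a_i(t)=\prod_i f_i(t_i)$ and $\sum_t\prod_i a_i(t)=R(\mathbf{B})(f_1,\dots,f_r)$ in the notation of the first definition of Section 3. By Lemma \ref{eqdef} (equivalence (1)) applied to $\mathbf{B}$ with these functions, $R(\mathbf{B})(f_1,\dots,f_r)$ is within $\varepsilon_B|R(\mathbf{B})|\prod_i\max(f_i)\le\varepsilon_B|R(\mathbf{B})|\,|A|^r$ of $|R(\mathbf{B})|\prod_i|f_i|/|B|^r$; since $|f_i|=\sum_{s}f_i(s)=|T_i|$ and $|C|^r=|A|^r|B|^r$, the latter equals $\frac{|A|^r}{|R(\mathbf{A})|}\cdot|R(\mathbf{C})|\prod_i|T_i|/|C|^r$. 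Multiplying through by $|R(\mathbf{A})|/|A|^r$ converts the $\varepsilon_B$ bound into $\varepsilon_B|R(\mathbf{C})|$ on the main term. Combining the two estimates by the triangle inequality yields $|R(\mathbf{C})(T_1,\dots,T_r)-|R(\mathbf{C})|\prod_i|T_i|/|C|^r|\le(\varepsilon_A+\varepsilon_B)|R(\mathbf{C})|$, proving $\mathbf{C}$ is an $(\varepsilon_A+\varepsilon_B)$-expander.

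Finally, the degree bound: a fixed $c=(a,b)\in C$ lies in a tuple of $R(\mathbf{C})$ only if, via the decomposition above, it comes from some $t\in R(\mathbf{B})$ with $b$ a coordinate of $t$ and some tuple of $R(\mathbf{A})$ containing the corresponding preimage $\alpha_{t,i}^{-1}(c)$; the number of choices of such $t$ (counted with coordinate position and multiplicity) is at most the degree of $b$ in $\mathbf{B}$, and for each the number of $R(\mathbf{A})$-tuples through a fixed point of $A$ is at most the degree in $\mathbf{A}$. Summing over all relational symbols gives $\Delta(\mathbf{C})\le\Delta(\mathbf{A})\Delta(\mathbf{B})$.

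I expect the main obstacle to be making the disjoint-union decomposition of $R(\mathbf{C})$ over the fibers of $\pi_B$ fully rigorous — in particular checking that the $\alpha_{t,i}$ really do partition each coordinate's fiber and that no $R(\mathbf{C})$-tuple is counted under two different $t$ — and correctly tracking that unary relations are excluded so that we only ever invoke the expander hypothesis for $r\ge2$; the two norm/triangle-inequality estimates themselves are then routine once the bookkeeping with the functions $f_i$ is set up. One should also double-check the edge case where $\mathbf{A}$ or $\mathbf{B}$ has a relation that is empty, so that "nonempty" in the definition of expander relation is respected on both sides.
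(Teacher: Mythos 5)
Your proposal is correct and follows essentially the same route as the paper's proof: you decompose $R({\bf C})$ fiberwise over $R({\bf B})$ via the bijections $\alpha_{t,i}$, apply the $\varepsilon_A$-expander property termwise, and handle the outer sum by applying Lemma~\ref{eqdef}(1) to the fiber-mass functions (your $f_i$ are exactly the paper's $g_i$, specialized to characteristic functions of the $T_i$), finishing with the triangle inequality and the same degree count. The only cosmetic difference is that you verify the subset form of the expander condition directly, whereas the paper verifies condition (2) of Lemma~\ref{eqdef} for arbitrary $[0;1]$-valued functions; these are equivalent with the same $\varepsilon$, so nothing is lost.
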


\begin{proof}
Let $R$ be an at least binary relation of type $\tau$. We will
prove that $(2)$ of Lemma~\ref{eqdef} holds for $R({\bf C})$.
Consider the functions $f_1, \dots ,f_r: C \rightarrow [0;1]$. Let
$g_i: B \rightarrow \mathbb{R}$ denote the function $g_i(b) = \sum_{x
\in \pi_B^{-1}(b)} f_i(x)$. Now $|g_i|=|f_i|$, and for every $b \in
B$ the inequality $0 \leq g_i(b) \leq |A|$ holds. So the expander
property of $R({\bf B})$ implies that

\noindent $\Big| R({\bf B})(g_1, \dots ,g_r)- |R({\bf B})
|\frac{|g_1| \dots |g_r|}{|B|^r} \Big| \leq \varepsilon_B |R({\bf
B})| |A|^r$.

\noindent Given an $r$-tuple $\und{b} = (b_1, \dots ,b_r) \in R({\bf
B})$ consider the bijections $\alpha_{\und{b},1}, \dots
,$\\ $\alpha_{\und{b},r}$ determining the twisted product. Clearly
$g_i(b_i)= \big| f_i|_{\pi_B^{-1}(b_i)} \big|= \big|f_i \circ
\alpha_{\und{b},i} \big|$. We sum up all the error terms using
$|A||B|=|C|, |R({\bf A})||R({\bf B})|=|R({\bf C})|$ and the triangle
inequality.

\medskip

\noindent $\begin{displaystyle} \Big| R({\bf C})(f_1, \dots ,f_r) -
|R({\bf C})| \frac{\prod_{i=1}^r
|f_i|}{|C|^r} \Big|= \\
\Big| R({\bf C})(f_1, \dots ,f_r) - |R({\bf C})|
\frac{\prod_{i=1}^r |g_i|}{|C|^r} \Big| \leq \\
\Big| R({\bf C})(f_1, \dots ,f_r)-\frac{|R({\bf A})|}{|A|^r}
R({\bf B})(g_1, \dots ,g_r) \Big| + \\
\Big| \frac{|R({\bf A})}{|A|^r} R({\bf B})(g_1, \dots ,g_r)-
|R({\bf C}) |\frac{ \prod_{i=1}^r |g_i|}{|C|^r} \Big| = \\
\Big| \sum_{\und{b} \in R({\bf B})} \Big( R({\bf A})(f_1 \circ
\alpha_{\und{b},1}, \dots ,f_r \circ \alpha_{\und{b},r}) -
\frac{|R({\bf A})|}{|A|^r} \prod_{i=1}^r g_i(b_i) \Big) \Big| + \\
\frac{|R({\bf A})|}{|A|^r} \Big| R({\bf B})(g_1, \dots ,g_r)-
|R({\bf B})|\frac{ \prod_{i=1}^r |g_i|}{|B|^r} \Big| \leq \\
\sum_{\und{b} \in R({\bf B})} \Big( \Big| R({\bf A})(f_1 \circ
\alpha_{\und{b},1}, \dots ,f_r \circ \alpha_{\und{b},r})-
\frac{|R({\bf A})|}{|A|^r}\prod_{i=1}^r |f_i \circ
\alpha_{\und{b},i}|
\Big| \Big)+ \\
\frac{|R({\bf A})|}{|A|^r} \varepsilon_B |R({\bf B})| |A|^r \leq
\sum_{\und{b} \in R({\bf B})} \varepsilon_A |R({\bf A})|
+\varepsilon_B |R({\bf C})| =(\varepsilon_A+\varepsilon_B) |R({\bf
C})|
\end{displaystyle}.$

The statement about the maximal degrees follows immediately from the
definition.
\end{proof}

Now we have arrived at the crucial technical theorem of the paper: 
Two structures with small maximal degree have a twisted product 
with large girth if the first structure has large girth.

\begin{theorem}(Algorithm)~\label{karcsu}
Consider the finite relational structures $\bf A$ and $\bf B$ of type 
$\tau$. Suppose that the girth of $\bf A$ is $\geq k$ and $|A|^{1/k} >
\Delta({\bf A}) \Delta({\bf B})$. Then there exists a twisted
product ${\bf C}$ of ${\bf A}$ and ${\bf B}$ with girth $\geq k$.
The structure ${\bf C}$ can be constructed in polynomial time (in
$|{\bf A}|$ and $|{\bf B}|$).
\end{theorem}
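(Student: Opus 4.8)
The plan is to build the twisted product ${\bf C}$ by choosing the bijections $\alpha_{t,i}$ of the definition greedily (or, equivalently, by a probabilistic argument whose expectation computation can be derandomized to run in polynomial time), and to show that with a suitable choice no short cycle is created. First I would fix, for each $r$-ary relational symbol $R$, each coordinate $i$, and each tuple $t\in R({\bf B})$, a bijection $\alpha_{t,i}\colon A\to C$ subject only to the compatibility constraint $\pi_B\circ\alpha_{t,i}=t_i$; such bijections always exist because $|\pi_B^{-1}(b)|=|A|$ for every $b\in B$, and the remaining freedom is exactly a permutation of $A$ for each $(R,i,t)$. A cycle in ${\bf C}$ of length $\ell<k$ projects under $\pi_B$ to a closed walk in ${\bf B}$ of length $\ell$; since $\Delta({\bf B})$ is small, the number of such closed walks through a fixed starting fibre is bounded, and for each of them the event that the chosen bijections "close up" the lift to an actual cycle in the fibre over a given base walk is a low-probability event once we also use that $\bf A$ has girth $\ge k$, so $\bf A$ itself contributes no short cycle "inside a single fibre copy".

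The key steps, in order, are: (i) set up the counting of potential short cycles in ${\bf C}$ — parametrise a would-be cycle of length $\ell\le k$ by its image closed walk $W$ in ${\bf B}$ together with the data of which tuples of ${\bf C}$ it uses; bound the number of choices of $W$ by roughly $|B|\,(\Delta({\bf B}))^{\ell}$ and the number of ways to decorate it by the structure of $\bf A$, getting an overall count that is polynomial in $|B|$ and at most $(\Delta({\bf A})\Delta({\bf B}))^{k}|A|$-ish; (ii) show that for a uniformly random choice of the bijections, each fixed potential cycle is actually realised with probability at most $1/(|A|-k)^{c}$ for an appropriate constant $c\ge 1$ coming from the number of independent fibre-transitions that must align — here the girth hypothesis on $\bf A$ guarantees that a closed walk that is "trivial in the $\bf B$-projection" forces a repetition, hence cannot be a cycle of ${\bf C}$, so every genuine short cycle of ${\bf C}$ must make a nontrivial move in the $\bf B$-coordinate and thus exposes at least one free bijection; (iii) combine (i) and (ii): the hypothesis $|A|^{1/k}>\Delta({\bf A})\Delta({\bf B})$ is exactly what makes the expected number of short cycles less than $1$, so a good choice exists; (iv) derandomise — the events are determined by finitely many permutations, and the expected-number-of-cycles statistic decomposes into a sum of local terms, so the method of conditional expectations lets us fix the bijections one entry at a time in polynomial time, each step recomputing a sum with polynomially many terms.

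The main obstacle I anticipate is step (ii): making precise the claim that a short cycle of ${\bf C}$ must use at least one (better: a controlled number of) "fresh" applications of the random bijections, and that conditioned on everything else these behave like independent near-uniform choices on a set of size $\ge|A|-k$. One has to handle carefully the degenerate cycles (a single tuple with a repeated coordinate), cycles that reuse the same tuple of ${\bf B}$ more than once, and the bookkeeping that ensures the probability bound has enough powers of $1/|A|$ to beat the $(\Delta({\bf A})\Delta({\bf B}))^{k}$ factor — this is where the exact exponent $1/k$ in the hypothesis gets used, so the counting in (i) and the probability bound in (ii) must be matched tightly rather than generously. A secondary technical point is verifying that the greedy/conditional-expectation construction never paints itself into a corner, i.e. that after fixing some bijection-entries the conditional expected number of short cycles can still be kept below $1$; this follows from the averaging argument but must be stated so that the polynomial-time algorithm is literally "for each free entry, pick a value minimising the conditional expectation," with the conditional expectation being a polynomially computable sum.
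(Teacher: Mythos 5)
The proposal breaks down at steps (ii)--(iii): the assertion that for uniformly random bijections the expected number of short cycles in ${\bf C}$ is less than $1$ (so that $|A|^{1/k}>\Delta({\bf A})\Delta({\bf B})$ saves the first moment) is false, and no tightening of constants can repair it. The error hides in the counting of step (i): a potential short cycle over a fixed closed walk of length $\ell$ in ${\bf B}$ is specified by choosing, for each of the $\ell$ base tuples, an arbitrary tuple of $R({\bf A})$ in the corresponding fibre copy, so there are roughly $|R({\bf A})|^{\ell}\approx(|A|\Delta({\bf A}))^{\ell}$ candidates, not $\Delta({\bf A})^{\ell}$; the probability that the $\ell$ random incidences close up is roughly $|A|^{-\ell}$, and the two powers of $|A|$ cancel exactly. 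Concretely, suppose the relation is binary, ${\bf B}$ contains a triangle $b_1b_2b_3$, and look at lifted triangles $c_1c_2c_3$ with $c_i\in\pi_B^{-1}(b_i)$: for random bijections each pair $(c_i,c_j)$ is an edge of ${\bf C}$ with probability about $|R({\bf A})|/|A|^2$, the three events use disjoint bijections and are independent, so the expected number of such triangles is about $|A|^3\bigl(|R({\bf A})|/|A|^2\bigr)^3=\bigl(|R({\bf A})|/|A|\bigr)^3$, a constant $\ge 1$ as soon as ${\bf A}$ has average degree at least $2$ (which it does in every intended application), independently of how large $|A|$ is. Since ${\bf B}$ is an arbitrary input structure and may have many short cycles, a uniformly random twisted product has short cycles in expectation and in fact with high probability; consequently the conditional-expectation derandomization in step (iv) has nothing to derandomize. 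The "one fresh bijection per cycle" observation in (ii) gives one factor $1/|A|$, but, as the computation shows, every such factor is matched by $|A|$ free choices of the lift, so the hypothesis $|A|^{1/k}>\Delta({\bf A})\Delta({\bf B})$ never enters a working first-moment bound.

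The paper's proof proceeds quite differently and avoids this trap: it starts from the direct product ${\bf A}\times{\bf B}$ (which, unlike a random twisted product, has no "accidental" incidences) and repeatedly improves it by composing a single bijection $\alpha_{t,l}$ with a transposition $(a\,a')$, where $c,c'$ lie in the same fibre over $t_l$ and are at distance at least $k$ in the current structure; the hypothesis $|A|^{1/k}>\Delta({\bf A})\Delta({\bf B})$ is used only to guarantee that such a far-away partner $c'$ exists, because a ball of radius $k$ has size at most $(\Delta({\bf A})\Delta({\bf B}))^{k}<|A|$ while the fibre has size $|A|$. A combinatorial claim about "cutting pairs" shows each sweep strictly reduces the number of minimal-length cycles and at least doubles the range of forbidden lengths, so $O(\log k)$ sweeps give girth $\ge k$ in polynomial time. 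If you want to salvage a probabilistic viewpoint you would have to randomize over local swaps of this kind rather than over the bijections themselves; as written, the blueprint does not prove the theorem.
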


\begin{proof}
We will define better and better twisted products of ${\bf A}$ and
${\bf B}$. The number of cycles of minimal length will
decrease in every step. We start with the direct product 
${\bf C}_0={\bf A \times B}$. Let ${\bf C}_i$ denote the twisted 
product after Step i. of the
algorithm. The bijections determining ${\bf C}_i$ are denoted by
$\alpha^i_{v,l}: A \rightarrow C$ (where $v$ is an $r$-ary
relational tuple of ${\bf B}$, $1 \leq l \leq r$ and $\pi_B \circ
\alpha_{v,l}=v_l$). 

Now we describe Step (i+1).
Assume that the girth of ${\bf C}_i$ is $j < k$. Let $t \in R({\bf
B})$ be an $r$-ary relational tuple, $1 \leq l \leq r$ and $c,c' \in
{\bf C}_i$ such that their distance is $\geq k$ and $\pi_B(c)=
\pi_B(c')=t_l$. We will specify other conditions on the choice of
$t,l,c$ and $c'$ later.

Now we will change the bijection $\alpha^i_{t,l}$ but none of the
other bijections defining ${\bf C}_i$. The relations of ${\bf
C}_{i+1}$ and ${\bf C}_i$ will agree but the $l^{th}$
coordinate of the tuples in $\pi_B^{-1}(t)$.

Set $\alpha^{i+1}_{t,l} = (a a') \circ \alpha^i_{t,l}$, where $(a
a')$ is the transposition of $A$ flipping $a$ and $a'$.

The figure illustrates this idea on the product of two
undirected paths. The number of cycles of length four decreases. 
(We neglect the fact that undirected graphs have many degenerate cycles 
of length two when considered as relational structures (digraphs). So 
we actually do not work with the shortest cycles in the figure.)

\begin{figure}[!h]
\begin{center}
\includegraphics[width=8.0cm]{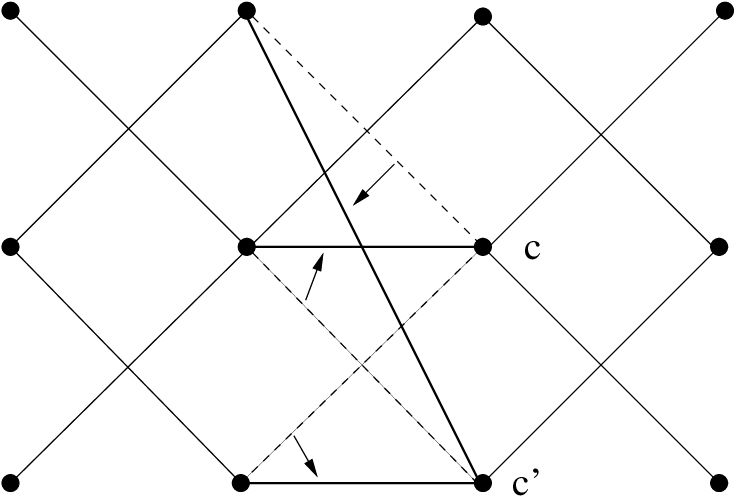}
\end{center}
\end{figure}

We call a cycle {\it short} if its length is $j$.
We will prove that the number of short cycles is strictly
less in ${\bf C}_{i+1}$ than in ${\bf C}_i$. We correspond to short
cycles in ${\bf C}_{i+1}$ short cycles in ${\bf C}_i$. Let $\xi$
denote the following bijection between the set of relational tuples
of ${\bf C}_{i+1}$ and ${\bf C}_i$. For a relational tuple $u$ of
${\bf C}_{i+1}$ set $\xi(u)=u$ if $\pi_B(u) \neq t$, else
$\xi(u)_h=u_h$ if $h \neq l$ and $\xi(u)_l= (a a') \circ u_l$.

We will show that the image of a short cycle under $\xi$ will be short.
Call the tuple $u$ {\it critical} if $\xi(u) \neq u$. This means that
$\pi_B(u) = t$ and the $l^{th}$ coordinate of $u$ is $c$ or
$c'$. The $l^{th}$ coordinate of a critical tuple is called {\it critical 
coordinate}.
We call a pair of tuples $(u_1, u_2)$ in ${\bf C}_{i+1}$ a {\it cutting pair}
if $\pi_B(u_1) \neq \pi_B(u_2)$ and the tuples intersect. Every
cycle with length $\leq k$ has a cutting pair: otherwise its image under 
$\pi_A$ would be a cycle, too.

\smallskip

{\bf Claim:} Let $t_1, \dots ,t_m$ be a cycle in ${\bf C}_{i+1}$, where 
$m < max \{ k, 2j \}$ and $(t_o,t_{o+1})$ is a cutting pair. Assume that
$c$ ($c'$) is a coordinate of both $t_o$ and $t_{o+1}$. Then $c$
($c'$) can not be a critical coordinate of $t_o$ or $t_{o+1}$. 

\smallskip

The Claim implies that the image of a short cycle under $\xi$ is a 
(short) cycle: If the image of two intersecting tuples under $\xi$
will not intersect then one should be a critical tuple and its 
critical coordinate should be in the intersection.

\smallskip

\begin{proof} (of the Claim)
We will prove by contradiction. We might suppose that $o=m$, say
$t_1$ is critical, $t_m$ is not and the critical coordinate $c$ 
is in their intersection. If there is no other adjacent 
critical-noncritical pair of tuples s.t. $c'$ is the critical coordinate
and it is in their intersection then the path $\xi(t_1), \dots ,\xi(t_m)$
connects $c$ and $c'$, hence $m \geq k$.

If there is such a pair, say $(t_h, t_{h+1})$ then we distinguish 
two cases:

If $t_{h+1}$ is critical then $\xi(t_1), \dots ,\xi(t_h)$ is a cycle, since 
$\xi(t_1)$ and $\xi(t_h)$ contains $c'$ and the other tuples remained 
adjacent under $\xi$. Similarly, $\xi(t_{h+1}), \dots ,\xi(t_m)$ is a 
cycle, hence $min \{h, m-h \} \geq j$, so $m \geq 2j$, a contradiction.
 
In the other case, when $t_h$ is critical the path $\xi(t_1), \dots ,\xi(t_h)$
connects $c'$ and $c$, hence $k \leq h < m$, a contradiction.

\end{proof}

The main step of the algorithm does not increase the girth. 
The image of a short cycle under $\xi$ will not be a cycle 
if it has a cutting pair such that exactly one of the two 
tuples is critical and the critical coordinate is in the 
intersection. The image of the other short cycles is still 
a cycle, and the cutting pairs are the same.

Let us do the main step of the algorithm for all possible triple $c
\in {\bf C}, l, t$ (and arbitrary $c'$) such that $\pi_B(c)=t_l$.
This will hit every cycle of length $<max\{ k, 2j \}$, since the cutting
pairs of a cycle do not change. If we iterate this $(log_2(k)+1)$ times 
then we will get the required girth.

The number of such triples is $O(|{\bf B}| |A|)$. We need to find $c'$
in every step: this requires $O({\bf |A||B|})$ time using Breadth First 
Search. We can exchange $c$ and $c'$ in the appropriate tuples in the 
same time. Altogether, the running time of the algorithm is 
$O(|A|{\bf |A||B|^2}log (k) )$.
\end{proof}

\section{Construction of expanders with large girth}

We prove Theorem~\ref{alg} in this section.
First we give a probabilistic existential proof 
in the spirit of \cite{Erdos,FV}.

\begin{lemma}~\label{random}
Let $\tau$ be a finite relational type, $k$ a positive integer and
$\varepsilon>0$. Then there is a $\Delta > 0$ such that for every
$n$ large enough there exists a $(\Delta, \varepsilon)$-expander of
type $\tau$ on $n$ vertices with girth $\geq k$.
\end{lemma}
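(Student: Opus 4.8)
The plan is to use the probabilistic method: pick a random relational structure on $n$ vertices with bounded degree, show it is an $\varepsilon$-expander with high probability, show it has few short cycles in expectation, and then delete a vertex from each short cycle. The natural model is to take, for each at least binary relational symbol $R$ of arity $r$, a union of $d$ random ``perfect matchings'' — i.e. choose $d$ random bijections (or random permutation-like structures) on $[n]$ and let $R$ consist of the tuples $(x,\sigma_1(x),\dots,\sigma_{r-1}(x))$ as $x$ ranges over $[n]$ and over the $d$ chosen systems. This makes $|R| = dn$, every element has $R$-degree exactly $rd$ (roughly), so the degree bound $\Delta$ depends only on $\tau$ and $d$; we will fix $d = d_{\tau,\varepsilon}$ at the end. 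Unary relations can be chosen arbitrarily (they impose no girth or expansion constraints).

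First I would verify the expander property. Fix subsets $S_1,\dots,S_r$ and let $p_i = |S_i|/n$. For a single random bijection system, $R(S_1,\dots,S_r)$ is a sum of $n$ indicator variables, one per ``fiber'' $x$, each with probability roughly $\prod p_i$, but the indicators are not independent (they come from random permutations). I would control this either by a martingale / bounded-differences argument (swapping two values of one permutation changes the count by $O(1)$, so Azuma gives concentration of width $O(\sqrt{n\log n})$ around the mean $dn\prod p_i$), or by a second-moment / negative-association estimate. Since there are only $2^{rn}$ choices of the tuple $(S_1,\dots,S_r)$ and only $|\tau|$ relations, a union bound over all of them costs a factor $2^{O(rn)}$, which is beaten by a concentration bound of the form $\exp(-c \varepsilon^2 d n)$ provided $d$ is chosen large compared to $r/\varepsilon^2$. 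Hence for $d = d_{\tau,\varepsilon}$ large enough, with probability $\ge 1/2$ the structure is an $\varepsilon$-expander, and by construction $\Delta \le \Delta_{\tau,\varepsilon}$.

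Next I would bound the number of short cycles. A cycle of length $t$ in the sense defined in Section 2 is determined by a bounded-size ``pattern'' (which relations, which coordinates are glued) together with an injection of its $\le tr$ vertices into $[n]$; for a fixed pattern the probability that all required tuples land in the randomly chosen relations is $O(n^{-1})$ per ``extra'' constraint, so that a cycle on $s$ vertices occurs with probability $O(n^{s}\cdot n^{-s}\cdot n^{-1}) = O(n^{-1})$ — more carefully, the expected number of cycles of length $< k$ is $O_{\tau,d,k}(1)$, a constant independent of $n$ (this is exactly the Erdős-type count; also degenerate length-one cycles, i.e. tuples with a repeated coordinate, occur $O(1)$ times in expectation since a random bijection has no fixed point typically and we can also forbid them outright in the model). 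By Markov and a union bound with the expander event, for $n$ large there is a structure that is simultaneously an $\varepsilon$-expander and has at most, say, $n/2$ vertices lying on short cycles. Deleting those vertices (and all tuples through them) destroys all cycles of length $< k$, keeps the degree bound, keeps expansion up to a harmless rescaling of $\varepsilon$ (since $|R|$ and $n$ shrink by at most a constant factor — here I would either absorb this into the constant or, more cleanly, start the argument with $\varepsilon/2$ so that the deletion step only worsens the expansion parameter by a controlled amount), and leaves $\ge n/2$ vertices. Adding back isolated vertices to reach exactly the desired size (or just proving existence for infinitely many $n$ and noting the statement only claims ``for every $n$ large enough,'' padding with isolated points which only help girth and do not hurt expansion once $\varepsilon$ is slightly slackened) finishes the proof.

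The main obstacle is the concentration step for the expander inequality: the coordinates of a random permutation are dependent, so one cannot simply invoke a Chernoff bound, and one must be careful that the bounded-differences constant and the number of independent permutations $d$ are chosen so that $\exp(-c\varepsilon^2 d n)$ genuinely dominates the $2^{O(rn)}$ union bound over all subset-tuples — in particular $d$ must be taken $\gtrsim r/\varepsilon^2$ times a large absolute constant, which is fine since $\Delta$ is allowed to depend on $\tau$ and $\varepsilon$. A secondary nuisance is bookkeeping the effect of the cycle-killing deletions on the expansion constant, which is why I would run the whole argument with $\varepsilon/2$ from the start.
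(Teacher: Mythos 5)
Your proposal is sound, but it takes a noticeably different route from the paper. The paper works with the independent binomial model: it puts each $r$-tuple into $R$ independently with probability $D/n^{r-1}$ on a ground set of $2n$ elements, so the expander inequality for all $2^{2nr}$ choices of $(S_1,\dots,S_r)$ follows from a plain Chernoff bound, with the centering taken at the absolute quantity $Dn\prod|S_i|/n^r$; degrees are then not automatically bounded, so the paper truncates (removes the few high-degree vertices together with the vertices on short cycles, trimming from $2n$ down to exactly $n$), and the absolute centering is exactly what makes this removal painless. You instead use a random ``union of $d$ permutation systems'' model, which buys you a deterministic degree bound and an exact count $|R|=dn$, but costs you independence, so you must replace Chernoff by a bounded-differences/Azuma--McDiarmid inequality for random permutations; your accounting that $\exp(-c\varepsilon^2dn/r)$ must beat the $2^{O(rn)}$ union bound and that $d$ may be taken large in terms of $r$ and $\varepsilon$ is the right way to close that step. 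Your short-cycle count (expected $O_{\tau,d,k}(1)$ cycles of length $<k$, including degenerate tuples with repeated coordinates) is correct, and since only $O(1)$ vertices and tuples are deleted, the $\varepsilon/2$ slack indeed absorbs both the deletion and the padding by isolated vertices. One caution: your parenthetical that expansion survives because ``$|R|$ and $n$ shrink by at most a constant factor'' is not a valid general principle --- deleting a constant \emph{fraction} of vertices can ruin the expander inequality, because the density $|R|/|S|^r$ can change by a constant factor; your argument is saved only because the number of deleted vertices is $O(1)$ (additive), so when writing this up you should phrase the deletion step that way rather than via a constant-factor shrinkage, or adopt the paper's trick of centering at the absolute expected count so that removals require no renormalization at all.
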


\begin{proof}
We consider a probability space on the set of relational structures
with base set $\{1, \dots ,2n \}$. For every $r$-ary relational
symbol $R \in \tau$ and $r$-tuple $u$ let $Pr(u \in R)
=\frac{D}{n^{r-1}}$ independently, where the constant $D$ will be
chosen later. The expected number of cycles with length $\leq k$ is
$O(c^kD^k)$, and the expected degree of a vertex is $O(cD)$, 
where $c$ is a constant depending only on $\tau$. 
Set $\Delta$ to be ten times the expected
value of the degree of a vertex.

The Markov inequality implies that the number of elements covered by
the cycles with length $\leq k$ is at most $n/4$ with probability
$1-o_n(1)$, and the number of elements with degree at least $\Delta$ is
at most $n/4$ with probability $\geq \frac{3}{5}$. Remove every 
element with large degree or covered by a short cycle (to get a
structure on exactly $n$ elements we may remove more), and consider
the resulted structure ${\bf A}$ with base set $A$. With probability 
$\frac{3}{5}-o_n(1)$ the girth of ${\bf A}$ is $\geq k$ and the maximal 
degree of ${\bf A}$ is bounded. 

We have to prove
the expander property. Consider the $r$-ary relation $R$ and the
subsets $S_1, \dots S_r \subseteq \{1, \dots ,2n\}$. The probability
that $|R(S_1, \dots ,S_r) - Dn \frac{\prod_{i=1}^r |S_i|}{n^r}| <
\frac{\varepsilon}{3} Dn$ is at most $2e^{-\frac{1}{36}
\varepsilon^2 Dn}$ by the Chernoff bound. Since the number of the
possible choices is $2^{2nr}$ this will hold for a $D$ large enough
with probability $1-o_D(1)$ for every $r$, every $r$-ary relational 
symbol $R \in \tau$ and every $S_1, \dots ,S_r \subseteq A$. In particular, 
$\big| |R({\bf A})|-Dn \big| < \frac{\varepsilon}{3} Dn$. Hence ${\bf A}$ 
is an $\varepsilon$-expander. Altogether, with probability 
$\frac{3}{5}-o_n(1)-o_1(D)$ the structure ${\bf A}$ is a 
$(\Delta, \varepsilon)$-expander with girth $\geq k$.
And this probability is positive if $n$ and $D$ are large enough.
\end{proof}

\begin{lemma}~\label{graph2structure}
Consider the $d$-regular undirected graph $G=(V,E)$ with second largest
eigenvalue $\lambda$ and the integer $k \geq 2$. Let ${\bf S}$ be
the relational structure with base set $V$ and a single $k$-ary
relation $R_k$:

\noindent $R_k=\{ (a_1, \dots ,a_k) : \forall i \text{  }
(a_i,a_{i+1}) \in E \}$.

\noindent Then the relational structure ${\bf S}$ is a
$\Big( k d^{k-1}, (k-1)\frac{|\lambda|}{d} \Big)$-expander.
\end{lemma}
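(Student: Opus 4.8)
This is the higher-arity analogue of the Expander Mixing Lemma, and I would prove it by the same linear-algebraic computation. The degree statement is a direct count: fix a vertex $v$ and a position $i\in\{1,\dots,k\}$; a tuple $(a_1,\dots,a_k)\in R_k$ with $a_i=v$ consists of a backward walk $v=a_i,a_{i-1},\dots,a_1$ of length $i-1$ and a forward walk $v=a_i,a_{i+1},\dots,a_k$ of length $k-i$, so by $d$-regularity there are exactly $d^{i-1}d^{k-i}=d^{k-1}$ such tuples. Summing over $i$, every vertex lies in exactly $kd^{k-1}$ tuples (counted with multiplicity), hence $\Delta({\bf S})=kd^{k-1}$; the same count gives $|R_k|=nd^{k-1}$, where $n=|V|$.

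For the expander property I would switch to matrix notation. Let $A$ be the adjacency matrix of $G$, let $\mathbf{1}$ be the all-ones vector, and for $S\subseteq V$ let $P_S$ denote the diagonal $0/1$-matrix with diagonal $\chi_S$; throughout, $\|\cdot\|$ is the Euclidean norm of a vector and the operator norm of a matrix. Then
\[ R_k(S_1,\dots,S_k)=\mathbf{1}^T P_{S_1}AP_{S_2}A\cdots AP_{S_k}\mathbf{1}, \]
a product with $k-1$ copies of $A$ interleaved among the $P_{S_i}$. Split $A=\bar A+E$ with $\bar A=\tfrac{d}{n}\mathbf{1}\mathbf{1}^T$ the rank-one ``average'' adjacency matrix. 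Since $A\mathbf{1}=d\mathbf{1}$, both $A$ and $\bar A$ fix $\mathbf{1}$ with eigenvalue $d$, so $E\mathbf{1}=0$ and the remaining eigenvalues of $E$ are those of $A$ on $\mathbf{1}^\perp$; thus $\|E\|\le|\lambda|$ (with $\lambda$ read as in the Expander Mixing Lemma), while $\|A\|=\|\bar A\|=d$.

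The decomposition is chosen so that the product with every $A$ replaced by $\bar A$ is exactly the desired main term: since $\bar A$ is rank one, $\mathbf{1}^T P_{S_1}\bar AP_{S_2}\bar A\cdots\bar AP_{S_k}\mathbf{1}$ collapses to $(\tfrac{d}{n})^{k-1}\prod_{i=1}^k|S_i|=|R_k|\,\tfrac{\prod_i|S_i|}{n^k}$. So it remains to bound the difference, which I would expand by the standard hybrid identity, turning the $A$'s into $\bar A$'s one at a time: the difference is a sum of $k-1$ terms, the $j$-th being
\[ \mathbf{1}^T\big(P_{S_1}A\cdots AP_{S_j}\big)\,E\,\big(P_{S_{j+1}}\bar A\cdots\bar AP_{S_k}\big)\mathbf{1}=u_j^T E\,v_j, \]
with $u_j=(P_{S_1}A\cdots AP_{S_j})^T\mathbf{1}$ and $v_j=P_{S_{j+1}}\bar A\cdots\bar AP_{S_k}\mathbf{1}$. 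Using $\|A\|=\|\bar A\|=d$, $\|P_S\|\le1$ and $\|\chi_S\|\le\sqrt n$ one gets $\|u_j\|\le d^{j-1}\sqrt n$ and $\|v_j\|\le d^{k-1-j}\sqrt n$, so each term is at most $|\lambda|\,d^{k-2}n$ in absolute value, and the total error is at most $(k-1)|\lambda|\,d^{k-2}n=(k-1)\tfrac{|\lambda|}{d}\,|R_k|$, which is exactly the claim.

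The only point needing a little care is the bookkeeping in the hybrid step: the product is not a bare $\prod X_i$ but carries the projections $P_{S_i}$ and the scalar endpoints $\mathbf{1}^T,\mathbf{1}$, so one must keep the single factor of $E$ in the middle of the $j$-th term and fold everything else into the two operator-norm bounds for $u_j$ and $v_j$. Beyond that the estimates are immediate, and for $k=2$ the argument is just the Expander Mixing Lemma, as noted after the definition of expander relations. One could equally well argue by induction on $k$, peeling off $P_{S_1}A$ and writing $A=\bar A+E$ at each step.
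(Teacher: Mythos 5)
Your argument is correct, and it takes a genuinely different route from the paper. The paper proves the lemma by induction on $k$: the base case $k=2$ is the graph Expander Mixing Lemma quoted as a black box, and the inductive step works with the functional form of the expander property (its Lemma on equivalent definitions), peeling off the last coordinate via the recursively defined functions $y_i$ with $\max(y_i)\le d^{i-1}$ and paying one error term $\varepsilon|R_k|$ per step, for the same total $(k-1)\tfrac{|\lambda|}{d}|R_k|$. You instead write $R_k(S_1,\dots,S_k)=\mathbf{1}^T P_{S_1}AP_{S_2}\cdots AP_{S_k}\mathbf{1}$, split $A=\tfrac{d}{n}\mathbf{1}\mathbf{1}^T+E$ with $\Vert E\Vert\le|\lambda|$, and run a hybrid (telescoping) argument with operator-norm bounds; this is self-contained spectrally, reproves the Mixing Lemma as the case $k=2$ rather than importing it, and needs neither the functional reformulation nor induction, while the paper's route stays entirely within the combinatorial/functional framework it has already set up (and reuses elsewhere) and never touches matrix norms. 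Both telescopes run along the $k-1$ edges of the walk and both yield exactly the claimed constant $(k-1)\tfrac{|\lambda|}{d}$ and the degree bound $kd^{k-1}$; your reading of $\lambda$ as the second eigenvalue in absolute value matches the convention the paper implicitly uses when invoking the Mixing Lemma, so there is no discrepancy there.
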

\begin{proof}
Note that $|R_i|= |V| d^{i-1}$ and the degree of every element is
$i d^{i-1}$. Set $\varepsilon=\frac{|\lambda|}{d}$. 
We prove by induction on $k$: First suppose that $k=2$. We will use
the expander mixing lemma \cite{NatiAvi}: for every $T,W \subseteq V$ the
inequality $|E(T,W)-d\frac{|T||W|}{|V|}| \leq \lambda \sqrt{|T||W|}$
holds. This implies $\Big| R_2(T,W)-\frac{|T||W|}{|V|^2} |R_2|  \Big|
\leq \varepsilon |R_2|$. Hence $R_2$ is an $\varepsilon$-expander
relation.

Assume that we have proved the lemma for $(k-1)$. Consider the
functions $x_1, \dots ,x_k: S \rightarrow [0;1]$. By
Lemma~\ref{eqdef} we need to show that $\Big| R_k(x_1, \dots ,x_k) -
|R_k| \frac{\prod_{i=1}^k |x_i|}{|V|^k} \Big| \leq \varepsilon (k-1)
|R_k|$.

For $i=1, \dots ,k$ define the sequence of functions $y_i: S
\rightarrow \mathbb{R}$ recursively. Let $y_0$ be the constant
$\frac{1}{d}$ function and $\begin{displaystyle} y_{i+1}(a) =
\sum_{(a,b) \in E} x_i(b) \end{displaystyle}$. Note that
$|y_{i+1}|=R_2(y_i,x_{i+1})$. Clearly $0 \leq max (y_i) \leq
d^{i-1}$ and $R_i(x_1, \dots ,x_i) = |y_i|$. Now we use the
inductional hypothesis:

\noindent
$\big| R_k(x_1, \dots ,x_k) -|R_k| \frac{\prod_{i=1}^k |x_i|}{|V|^k}
\big|=\Big| R_2(y_{k-1},x_k)- |R_k| 
\frac{\prod_{i=1}^k |x_i|}{|V|^k} \Big| \leq \\
\Big| R_2(y_{k-1},x_k) - |R_2| \frac{|y_{k-1}| |x_k|}{|V|^2} \Big|+
\Big| |R_2| \frac{|y_{k-1}| |x_k|}{|V|^2} - |R_k|
\frac{\prod_{i=1}^k |x_i|}{|V|^k} \Big| \leq  \\
\varepsilon |R_2| max(y_{k-1})+ \frac{d|x_k|}{|V|} \Big|
|y_{k-1}|-\frac{\prod_{i=1}^{k-1}|x_i|}{|V|^{k-1}}|R_{k-1}|\Big| \leq \\
\varepsilon |R_k|+\frac{d|x_k|}{|V|}(k-2)\varepsilon |R_{k-1}| \leq
(k-1)\varepsilon |R_k|$

\noindent The structure ${\bf S}$ is a
$\big( k d^{k-1}, (k-1)\varepsilon \big)$-expander.
\end{proof}


\begin{proof}(of Theorem~\ref{alg})
Assume that every relational symbol in $R$ 
is at most $r$-ary. We know that for some $d$ there exists a polynomial
time construction of $d$-regular expander graphs with eigenvalue gap
$|\frac{\lambda}{d}|<\frac{\varepsilon}{2r}$, see \cite{M, LPS}. 

On the other hand by Lemma~\ref{random} there exists an
$\frac{\varepsilon}{2}$-expander ${\bf A}$ with girth at least $k$
such that $|A|^{\frac{1}{k}}>r d^{r-1} \Delta({\bf A})$ holds. If $n$
is large enough then there exists such an ${\bf A}$ of size $log(n)$
by Theorem~\ref{random}, and so we can find it in polynomial time.

We construct an expander graph $G$ of size $\frac{n}{|A|}$ with the
above properties. Lemma~\ref{graph2structure} shows how to construct
an $\varepsilon \frac{r-1}{2r}$-expander ${\bf B}$ on the vertex set
of $G$ with maximal degree $rd^{r-1}$. The conditions of
Lemma~\ref{karcsu} hold for ${\bf A}$ and ${\bf B}$, hence there
exists a polynomial time constructible twisted product ${\bf C}$ of
${\bf A}$ and ${\bf B}$ with girth at least $k$. Now ${\bf C}$ is an
$\varepsilon$-expander by Lemma~\ref{expszor} with maximal degree at
most $M=rd^{r-1}\Delta(A)$ and girth at least $k$.
\end{proof}


\section{CSP vs MMSNP}

Now we prove Theorem~\ref{CSP=MMSNP} showing that CSP and MMSNP are
computationally equivalent. Feder and Vardi \cite{FV} proved the 
following (see \cite{KNEJC} for a simple proof). 

\begin{theorem}
Let $L$ be an $MMSNP$ language. Then there is a finite set of
relational structures $\mathcal{T}$ and a positive integer $k$ such
that

\begin{enumerate}

\item $L$ has a polynomial time reduction to $CSP(\mathcal{T})$.

\item $CSP(\mathcal{T})$ restricted to structures with girth at least $k$
has a polynomial time reduction to $L$.

\end{enumerate}
\end{theorem}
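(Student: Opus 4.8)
The plan is to combine the girth-restricted Feder--Vardi theorem (the last displayed theorem in the excerpt) with the derandomized Sparse Incomparability Lemma (Theorem~\ref{reduction}) to upgrade the girth-restricted reduction in item (2) to an unrestricted one. Let $L$ be the given MMSNP language. First I would invoke the Feder--Vardi theorem to obtain a finite set $\mathcal{T}$ of $\tau$-structures and an integer $k$ such that (a) $L$ reduces to $CSP(\mathcal{T})$ in polynomial time, and (b) $CSP(\mathcal{T})$ \emph{restricted to inputs of girth at least $k$} reduces to $L$ in polynomial time. Item (1) of the target theorem is then literally item (a), so the entire content to be established is item (2): a polynomial-time reduction of the \emph{unrestricted} problem $CSP(\mathcal{T})$ to $L$.

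For item (2), given an arbitrary input structure ${\bf S}$ of type $\tau$, I would set $t = \max_{{\bf T} \in \mathcal{T}} |{\bf T}| + 1$ (so every ${\bf T} \in \mathcal{T}$ has size $<t$) and apply Theorem~\ref{reduction} with this $t$ and with the same $k$ as above to produce, in polynomial time, a structure ${\bf S'}$ of type $\tau$ with girth $>k$ satisfying ${\bf S} \in CSP({\bf T}) \iff {\bf S'} \in CSP({\bf T})$ for every ${\bf T}$ of size $<t$. Taking the disjunction over ${\bf T} \in \mathcal{T}$ gives ${\bf S} \in CSP(\mathcal{T}) \iff {\bf S'} \in CSP(\mathcal{T})$. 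Since ${\bf S'}$ has girth $>k \ge k$, it is a legitimate input for the girth-restricted reduction of (b), so composing the map ${\bf S} \mapsto {\bf S'}$ with that reduction yields a polynomial-time reduction from $CSP(\mathcal{T})$ to $L$, establishing item (2). This gives exactly the finite set $\mathcal{T}$ required by the statement, proving Theorem~\ref{CSP=MMSNP}.

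The routine points to check are bookkeeping: that $\mathcal{T}$ from Feder--Vardi is indeed finite and contained in $Rel(\tau)$ (it is, by that theorem), that the composition of two polynomial-time reductions is polynomial-time, and that ${\bf S'}$ truly has girth exceeding the threshold demanded by the girth-restricted reduction (Theorem~\ref{reduction} gives girth $>k$, which certainly is $\ge k$). The one genuinely substantive ingredient is Theorem~\ref{reduction} itself, whose proof (via expander relational structures and the twisted product) is the technical core of the paper; here it is used purely as a black box. The main conceptual obstacle is therefore not in this section at all but in having already established Theorem~\ref{reduction}: without a \emph{deterministic} poly-time Sparse Incomparability Lemma, one only recovers the random-sense equivalence of Feder and Vardi, whereas with it the argument above is essentially immediate.
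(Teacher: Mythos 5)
Your proposal proves the wrong statement. The theorem you were asked to prove is the girth-restricted Feder--Vardi theorem itself (note the phrase ``restricted to structures with girth at least $k$'' in item (2), and the existential quantification over $k$), and your opening move is to ``invoke the Feder--Vardi theorem to obtain a finite set $\mathcal{T}$ and an integer $k$ such that (a) and (b) hold'' --- but (a) and (b) are precisely items (1) and (2) of the statement under consideration, so as a proof of that statement your argument is circular. What you actually establish is Theorem~\ref{CSP=MMSNP}, i.e.\ the version of item (2) \emph{without} the girth restriction. That derivation --- apply Theorem~\ref{reduction} with $t$ exceeding the size of every template in $\mathcal{T}$ to replace an arbitrary input by an equivalent one of girth $>k$, then feed the result to the girth-restricted reduction --- is correct, and it is exactly the one-line composition the paper itself uses to pass from the quoted theorem to Theorem~\ref{CSP=MMSNP}. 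It is just not a proof of the quoted theorem.

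For the record, the paper does not prove the quoted theorem either: it is imported from Feder and Vardi \cite{FV} (with a simpler proof in \cite{KNEJC}), and none of the machinery of this paper (expander relations, twisted products) enters into it. A genuine proof would have to start from an arbitrary MMSNP sentence, construct the finite template family $\mathcal{T}$ out of the existentially quantified colorings consistent with the forbidden patterns, verify the forward reduction $L \le_p CSP(\mathcal{T})$, and then show that the reverse reduction is valid on inputs whose girth exceeds the size of the forbidden patterns --- this last point is where the girth hypothesis does real work, since on such inputs the forbidden configurations can only occur in a tree-like fashion. None of these steps appears in your proposal, so the substantive content of the statement is entirely missing; what remains is a correct but out-of-place proof of the corollary.
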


Theorem~\ref{reduction} and Theorem~\ref{FeV} would imply 
Theorem~\ref{CSP=MMSNP}. So we succeed to prove Theorem~\ref{reduction}.

\begin{lemma}~\label{exphely}
Consider the structures ${\bf A,B}$ and ${\bf T}$ of type $\tau$,
where ${\bf A}$ is an $\varepsilon$-expander. Suppose that every
relational symbol in $\tau$ is at most $r$-ary and
$\varepsilon|T|^r<1$. Let ${\bf C}$ be a twisted product of ${\bf
A}$ and ${\bf B}$. Then ${\bf B}$ is homomorphic to ${\bf T}$ iff
${\bf C}$ is homomorphic to ${\bf T}$.
\end{lemma}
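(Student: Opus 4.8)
The plan is to prove the two implications separately. The forward direction is immediate: since $\pi_B\colon \mathbf{C}\to\mathbf{B}$ is a homomorphism by the definition of twisted product, any homomorphism $\mathbf{B}\to\mathbf{T}$ composes with $\pi_B$ to give a homomorphism $\mathbf{C}\to\mathbf{T}$. So the content is entirely in the converse: given a homomorphism $h\colon \mathbf{C}\to\mathbf{T}$, I must produce a homomorphism $\mathbf{B}\to\mathbf{T}$. The natural candidate is a ``majority vote'' map: for each $b\in B$, look at the multiset $\{h(x) : x\in\pi_B^{-1}(b)\}\subseteq T$, which has size $|A|$, and let $g(b)$ be a value of $T$ that occurs most often (equivalently, whose fiber under $h|_{\pi_B^{-1}(b)}$ has density $\geq 1/|T|$).

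The key step is then to verify that this $g$ is a homomorphism $\mathbf{B}\to\mathbf{T}$, and this is exactly where the expander hypothesis on $\mathbf{A}$ enters. Fix an $r$-ary relational symbol $R$ and a tuple $\underline{b}=(b_1,\dots,b_r)\in R(\mathbf{B})$; I must show $(g(b_1),\dots,g(b_r))\in R(\mathbf{T})$. Using the bijections $\alpha_{\underline{b},i}\colon A\to C$ from the twisted product, define $f_i\colon A\to\{0,1\}$ to be the indicator of the set $\{x\in A : h(\alpha_{\underline{b},i}(x))=g(b_i)\}$; by choice of $g$ this set has size $\geq |A|/|T|$, so $|f_i|\geq |A|/|T|$. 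Since $(x_1,\dots,x_r)\in R(\mathbf{A})$ iff $(\alpha_{\underline{b},1}(x_1),\dots,\alpha_{\underline{b},r}(x_r))\in R(\mathbf{C})$, and $h$ is a homomorphism, every tuple of $R(\mathbf{A})$ with $f_i(x_i)=1$ for all $i$ maps under $h\circ\alpha$ to the fixed tuple $(g(b_1),\dots,g(b_r))$ in $T^r$. If that tuple were \emph{not} in $R(\mathbf{T})$, then $R(\mathbf{A})(f_1,\dots,f_r)$ would have to be $0$ (no such tuple of $\mathbf{A}$ can exist, else $h$ would fail to preserve $R$). On the other hand, applying the $\varepsilon$-expander property of $R(\mathbf{A})$ (via Lemma~\ref{eqdef}(2), or directly the expander inequality with $S_i=\{x : f_i(x)=1\}$) gives
\[
R(\mathbf{A})(f_1,\dots,f_r) \geq |R(\mathbf{A})|\frac{\prod_{i=1}^r|f_i|}{|A|^r} - \varepsilon|R(\mathbf{A})| \geq |R(\mathbf{A})|\left(\frac{1}{|T|^r}-\varepsilon\right) > 0,
\]
where the last inequality uses the hypothesis $\varepsilon|T|^r<1$ (and $|R(\mathbf{A})|>0$, which holds since $R(\mathbf{B})\neq\emptyset$ forces $R(\mathbf{A})\neq\emptyset$ in a twisted product — here one should note $R(\mathbf{A})\ne\emptyset$ because $R$ appears in a tuple and the twisted product condition ties $R(\mathbf{C})$ to $R(\mathbf{A})$; if $R(\mathbf{A})$ could be empty the claim is anyway vacuous for that $R$ only if $R(\mathbf{B})$ is too). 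This contradiction shows $(g(b_1),\dots,g(b_r))\in R(\mathbf{T})$, so $g$ is a homomorphism.

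The main obstacle, and the one point needing care, is handling unary relations and the edge case $|R(\mathbf{A})|=0$: the expander inequality is only assumed for at least binary relations, so for unary $R$ I should argue separately that the majority vote respects $R(\mathbf{T})$ — but in fact for a unary relation the twisted product definition combined with $\pi_B$ being a homomorphism already forces membership once we know $\mathbf{C}\to\mathbf{T}$, or one simply observes a unary constraint on $b$ pulls back to a unary constraint on the whole fiber $\pi_B^{-1}(b)$, all of which $h$ sends into $R(\mathbf{T})$, so certainly the majority value lies there. The remaining bookkeeping — that $|f_i|\ge |A|/|T|$, that the fibers all have size exactly $|A|$ because the $\alpha_{\underline b,i}$ are bijections $A\to C$ composed with $\pi_B$, and that $\prod|f_i|/|A|^r \ge |T|^{-r}$ — is routine.
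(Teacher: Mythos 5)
Your argument is essentially the paper's own proof: the same majority-vote map sending $b$ to a value of $T$ whose preimage inside $\pi_B^{-1}(b)$ has size at least $|A|/|T|$, followed by the same application of the expander inequality of $\mathbf{A}$ to the sets $\alpha_{\underline{b},i}^{-1}(S_i)$, with $1/|T|^r>\varepsilon$ forcing a tuple of $R(\mathbf{C})$ whose coordinates lie in the majority fibers. The only inaccuracy is your aside on unary relations --- a unary tuple $b\in R(\mathbf{B})$ does \emph{not} constrain the whole fiber $\pi_B^{-1}(b)$, since condition (3) of the twisted product only places $\alpha_{b,1}(R(\mathbf{A}))$ into $R(\mathbf{C})$ --- but the paper's proof does not treat unary relations separately either, so this side remark does not distinguish your argument from the one in the paper.
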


\begin{proof}
By the definition of the twisted product there is a homomorphism $\pi_B:
{\bf C} \rightarrow {\bf B}$. If ${\bf B}$ is homomorphic to ${\bf T}$ 
then so is ${\bf C}$. In order to prove the converse assume that
there exists a homomorphism $\varphi: {\bf C} \rightarrow {\bf T}$.
Let us define the mapping $\xi: B \rightarrow T$ in the following
way. For an element $b \in B$ let $\xi(b)$ be one of the elements of
$T$ such that $|\pi_B^{-1}(b) \cap \varphi^{-1}(\xi(b))| \geq
\frac{|A|}{|T|}$. We will show that $\xi$ is a homomorphism. Let $R$
be an $r$-ary relational symbol in $\tau$, $b=(b_1,\dots ,b_r) \in
R({\bf B})$. We need to show that $(\xi(b_1), \dots ,\xi(b_r)) \in
R({\bf T})$.

Set $S_i=\varphi^{-1}(\xi(b_i)) \cap \pi_B^{-1}(b_i)$. We succeed to
show that there is a tuple $(c_1, \dots ,c_r) \in R({\bf C})$ with
$c_i \in S_i$: In this case the tuple $(\xi(b_1), \dots
,\xi(b_r)) = (\varphi(c_1), \dots ,\varphi(c_r))$ would be in
$R({\bf T})$, since $\varphi$ is a homomorphism.

Denote the bijections corresponding to $b$ determining the twisted
product ${\bf C}$ by $\alpha_{b,i}: A \rightarrow \pi_B^{-1}(b_i)$.
The tuple $(c_1, \dots ,c_r)$ (where $c_i \in \pi_B^{-1}(b_i)$ for
every $i$) is in $R({\bf C})$ iff $\big( \alpha^{-1}_{b,1}(c_1),
\dots ,\alpha^{-1}_{b,r}(c_r) \big) \in R({\bf A})$.

We use the expander property of ${\bf A}$ for the sets
$\alpha^{-1}_{b,i}(S_i)$ for $1 \leq i \leq r$. Since $R({\bf
C})(S_1, \dots ,S_l)=R({\bf A})(\alpha^{-1}_{b,1}(S_1),\dots
,\alpha^{-1}_{b,r}(S_r))$ we have

$$\big| R({\bf C})(S_1, \dots ,S_l) -
|R({\bf A})| \frac{\prod_{i=1}^l |S_i|}{|A|^l} \big| \leq \varepsilon
|R({\bf A})|.$$

On the other hand $|R({\bf A})| \frac{\prod_{i=1}^l |S_i|}{|A|^l}>
\varepsilon |R({\bf A})|$ by the choice of the sets $S_i$ and
$\varepsilon$. Hence $R({\bf C})(S_1, \dots ,S_r)>0$, there exists
an appropriate tuple $(c_1, \dots ,c_r) \in R({\bf C})$. This
completes the proof of the lemma.
\end{proof}

\begin{proof} (of Theorem~\ref{reduction})
Let us choose $r$ such that every relational symbol in $\tau$ is at
most $r$-ary. Consider a $\frac{1}{t^r+1}$-expander ${\bf A}$ with
girth $>k$ and bounded degree. Hence if $|A|$ is large enough then
$|A|^{\frac{1}{k}} > \Delta({\bf A}) \Delta({\bf S})$ holds. Such an
expander ${\bf A}$ can be constructed in polynomial time (of $|S|$) 
for fixed $t$ and $k$. Now we can use Lemma~\ref{karcsu} for
${\bf A=A}$ and ${\bf B=S}$ to construct a twisted product ${\bf C}$
of girth at least $k$. Set ${\bf S'=C}$. Lemma~\ref{exphely} implies
Theorem~\ref{reduction}.
\end{proof}


\begin{thebibliography}{99}

\bibitem{AKSz}
M. Ajtai, J. Koml\'os, E. Szemer\'edi, \emph{Sorting in c logn
parallel steps}, Combinatorica 3(1), (1983), 1--19.

\bibitem{ASS}
N. Alon, Oded Schwartz, Asaf Shapira, \emph{An elementary
construction of constant-degree expanders}, 17th ACM-SIAM Symposium
on Discrete Algorithms, (2007), 454--458.





\bibitem{DRSS}
D. Duffus, V. R\"odl, B. Sands, N. Sauer, \emph{Chromatic numbers
and homomorphisms of large girth hypergraphs}, preprint, (2006).

\bibitem{Erdos} P. Erd\H os, \emph{Graph theory and probability},
Canad. J. Math., \textbf{11}, (1959), 34--38.


\bibitem{FV} T. Feder, M. Y. Vardi: The computational structure of monotone
monadic SNP and constraint satisfaction: A study through Datalog and group
theory, SIAM J. Comput. 28, 1 (1999), 57--104.





\bibitem{NatiAvi} S. Hoory, N. Linial, A. Widgerson, \emph{Expander graphs
and their applications}, http://www.cs.huji.ac.il/~nati/

\bibitem{KNS} A. Kostochka, J. Ne\v set\v ril, P. Smol\' {\i}kova, 
\emph{Colorings and homomorphisms of bounded degree and degenerate graphs}.
Graph theory (Prague, 1998). Discrete Math. 233 (2001), no. 1-3, 257--276. 

\bibitem{K}
G. Kun: On the complexity of Constraint Satisfaction Problem, PhD
thesis (in Hungarian), 2006.


\bibitem{KNEJC}
G. Kun, J. Ne\v set\v ril: Forbidden lifts (NP and CSP for
combinatorists), KAM-DIMATIA Series 2006-775 (to appear in European
J. Comb.).






\bibitem{Lo} L. Lov\'asz, \emph{On chromatic number of finite
set-systems}, Acta Math. Acad. Sci. Hungar. \textbf{19} 1968,
59--67.

\bibitem{LPS} A. Lubotzky, R. Phillips, P. Sarnak,
\emph{Ramanujan graphs},  Combinatorica \textbf{8}(3): 261 -- 277,
1988.

\bibitem{LSV} A. Lubotzky, B. Samuels, V. Vishne, \emph{Ramanujan complexes
of type $A_d$}, Israel J. of Math., 2005, accepted.

\bibitem{LSV2} A. Lubotzky, B. Samuels, V. Vishne, \emph{Explicit
constructions of Ramanujan complexes of type $A_d$}, Europ. J. of
Combinatorics, 2005, submitted.


\bibitem{M}
G. A. Margulis, \emph{Explicit group-theoretical constructions of
combinatorial schemes and their application to the design of
expanders and concentrators}, J. Probl. Inf. Transm., \textbf{24},
No. 1, (1988), 39--46.


\bibitem{MN}
J. Matou\v sek, J. Ne\v set\v ril: Constructions of sparse graphs
with given homomorphisms (to appear).

\bibitem{NR} J. Ne\v set\v ril, V. R\" odl: A short proof of the existence
of highly chromatic hypergraphs without short cycle, J. Comb. Th. B
{\bf 27} (1979), 225--227.

\bibitem{NS} J. Ne\v{s}et\v{r}il, M. H. Siggers: A new combinatorial 
approach to the Constraint Satisfaction Problem dichotomy conjecture,
32nd Symposium on the Mathematical Foundation of Computer Science, 2007, 
submitted.


\bibitem{RVW}
O. Reingold, S. Vadhan, A. Widgerson, \emph{Entropy, waves the
zig-zag product, and new constant degree expanders}, Annals of
mathematics, 155 (1), (2002), 157--187.




\end{thebibliography}
\end{document}